\documentclass[12pt]{article}
\usepackage[a4paper,left=2cm,right=2cm,top=2cm,bottom=2cm]{geometry}

\usepackage[backend=bibtex8]{biblatex}
\usepackage{amsmath,amssymb}
\usepackage{amsthm}
\newtheorem{theorem}{Theorem}
\newtheorem{proposition}{Proposition}
\newtheorem{corollary}{Corollary}

\theoremstyle{definition}
\newtheorem{definition}{Definition}

\newtheorem{remark}{Remark}
\newtheorem{example}{Example}

\usepackage{enumitem}

\usepackage[x11names]{xcolor}

\newcommand{\Z}{\mathbb{Z}}

\newcommand{\sA}{\left.\mathcal{A}_Q\right|_\sigma}
\newcommand{\triv}[1]{{\color{gray} #1}}
\newcommand{\fundom}[1]{\colorbox{gray!10}{#1}}

\usepackage{doi}
\usepackage{hyperref}

\title{Beyond the Laurent phenomenon}
\author{Andrei Zabolotskii}
\date{}

\AtEndDocument{%
	\par
	\medskip
	\begin{tabular}{@{}l@{}}%
		{School of Mathematics and Statistics, The Open University,}\\
		{Milton Keynes, MK7 6AA, United Kingdom}\\
		\texttt{andrei.zabolotskii\raisebox{-0.4ex}{{\fontfamily{pag}\selectfont @}}open.ac.uk}
\end{tabular}}

\begin{document}

\maketitle

\begin{abstract}
In a cluster algebra, a subset of initial cluster variables can be specialised in such a way that all elements of the resulting algebra become polynomial in the remaining variables.
\end{abstract}

{\let\thefootnote\relax\footnotetext{MSC2020: 13F60; 16D25.}}

\section{Introduction}
Let $\mathcal{A}_Q\subset \mathbb{Q}(x_1,\ldots,x_n)$ be the cluster algebra associated with a quiver $Q$. The celebrated Laurent phenomenon \cite{CA1} is the surprising fact that all elements of $\mathcal{A}_Q$ are actually Laurent polynomials in $x_1,\ldots,x_n$ with integer coefficients, i.e., $\mathcal{A}_Q$ is a subalgebra of $\Z[x_1,\ldots,x_n,x_1^{-1},\ldots,x_n^{-1}]$.

In this note, we prove another fact which takes the Laurent phenomenon to the next level in the following sense: in $\mathcal{A}_Q$, when some of the initial cluster variables $x_1,\ldots,x_n$ are specialised in a certain controlled way, all elements of the resulting algebra become \emph{polynomial} (not Laurent polynomial) in certain initial cluster variables. This is made concrete in Theorem~\ref{thm:main}.

We give some standard definitions from cluster algebra theory in section \ref{sec:def} (which a reader familiar with this area can skip) and prove our result in section \ref{sec:poly}. As an application, in section~\ref{sec:app} we generalise the known description of Conway--Coxeter friezes by cluster algebras of type $A_n$ and sketch a proof that all not-necessarily-positive tame integer frieze patterns arise from cluster algebras of type $A_n$ specialised this way.

\section{Cluster algebras}
\label{sec:def}
Let $Q$ be a finite quiver without loops or oriented 2-cycles. We identify the set $Q_0$ of vertices of $Q$ with the numbers $\{1,\ldots,n\}$. For each vertex $j\in Q_0$, we introduce a variable $x_j$; together $(x_1,\ldots,x_n
)$ form the \emph{initial cluster} and the individual $x_j$s are the \emph{initial cluster variables}. The \emph{cluster algebra} $\mathcal{A}_Q$ associated with $Q$ is a $\Z$-algebra generated by certain elements of $\mathbb{Z}[x_1,\ldots,x_n,x_1^{-1},\ldots,x_n^{-1}]$ (details below), called \emph{cluster variables}, with an additional structure on the set of cluster variables: they are grouped into overlapping sets called \emph{clusters}. A cluster and the corresponding quiver together are called a \emph{seed} \cite{KellerClusterAlgQuivRep}.

Since their introduction in \cite{CA1}, cluster algebras have attracted immense interest because of numerous connections to other areas of mathematics, from quiver representations to integrable systems. While it is impossible to review all these connections in this short note, this section aims to show what is the Laurent phenomenon and why it is considered impressive.

For a chosen vertex $j\in Q_0$, a procedure called \emph{mutation} $\mu_j$ can be applied to a seed. Under $\mu_j$, the set of vertices of the quiver $Q_0$ is unchanged, while the arrows change as follows: all arrows incident to the vertex $j$ change direction; then all directed 2-paths with central vertex $j$ are completed to triangles; then any resulting oriented 2-cycles are removed. As for the action of $\mu_j$ on the cluster variables, the $j$-th cluster variable $x_j$ is replaced by
\begin{equation}
\label{eq:mut}
x_j' = \left.\left( \prod_{i\to j}x_i + \prod_{j\to i} x_i \right)\right/x_j,
\end{equation}
where the first (resp.\ second) product is over the arrows incoming to $j$ (resp.\ outgoing from $j$). All other cluster variables stay the same. $(x_1,\ldots,x_{j-1},x_j',x_{j+1},\ldots,x_n)$ is a new cluster. The new seed can be mutated again at different vertices repeatedly, generating more clusters. $\mathcal{A}_Q$ is the $\Z$-algebra generated by all cluster variables from all clusters. In fact, any seed can be chosen as initial: after applying all possible mutations, the algebra generated by all cluster variables will be the same, as well as the set of clusters; only the explicit expressions for the cluster variables in terms of the initial cluster variables will differ.

One important family of cluster algebras is the cluster algebras of type $A_n$, where $n$ is a positive integer. The cluster algebra of type $A_n$ is $\mathcal{A}_Q$ where $Q$ is a path with $n$ vertices and any orientation of arrows; all such paths are mutation equivalent, i.e.\ related by sequences of mutations.
\begin{example}[the cluster algebra of type $A_2$]
\label{ex:a2}
Let $Q$ contain just two vertices connected with a single arrow. Then the cluster variables arising from a single mutation $\mu_1$ or $\mu_2$ are  $x_1' = \tfrac{1+x_2}{x_1}$ and $x_2'=\tfrac{1+x_1}{x_2}$, respectively. Also, let $x_0=\tfrac{1+x_1+x_2}{x_1x_2}$. The clusters in $\mathcal{A}_Q$ turn out to be the initial cluster $(x_1,x_2)$, and also $(x_1',x_2)$, $(x_1,x_2')$, $(x_1', x_0)$, $(x_0,x_2')$ and the images of these five under swapping the order of the cluster variables. Thus, there are 5 cluster variables: $x_1$, $x_2$, $x_1'$, $x_2'$, and $x_0$.
\end{example}

Since mutation involves division by a cluster variable, it is natural to expect that after the second mutation, rational functions other than Laurent polynomials in the initial cluster variables $x_j$ arise among the cluster variables. Yet, this never happens. The \emph{Laurent phenomenon} is the property of cluster algebras, noted and proved in \cite{CA1}, which states that all elements of $\mathcal{A}_Q$ are actually Laurent polynomials in the initial cluster variables $x_1,\ldots,x_n$ (in fact, in the cluster variables from any chosen cluster) with integer coefficients; that is, $\mathcal{A}_Q$ is a~subalgebra of $\Z[x_1,\ldots,x_n,x_1^{-1},\ldots,x_n^{-1}]$.

\section{The polyamorous phenomenon}
\label{sec:poly}

Specialisations of initial cluster variables to~1s have been studied since \cite{CA4}, most recently in \cite[Section 4.2]{AdSGFriezes} in the context of friezes of cluster algebras.
However, the following definition also allows specialisation to $-1$s.

\begin{definition}
\label{def:spec}
A \emph{specialised cluster algebra} is the $\Z$-algebra arising from a cluster algebra associated with a quiver $Q$ 
by choosing an initial cluster and specialising some of the initial cluster variables  to $1$s or $-1$s.
A specialised cluster algebra is associated with a \emph{partially labelled quiver}, where the vertices of $Q$ corresponding to specialised cluster variables are labelled by the values assigned to these variables.
\end{definition}
Typically, a specialised cluster algebra does not have the structure of a cluster algebra; it is just a ring generated by certain Laurent polynomials, unless all specialisations are to~1s (and sometimes to $-1$s, in certain degenerate cases) \cite[Lemma 6.6]{OnCatCA}.

A specialisation of a cluster algebra is determined by the chosen initial cluster and \emph{specialisation relations} of the form $x_{j_1}=a_1$, $x_{j_2}=a_2,\ldots$ (e.g.\ $x_2=1$, $x_5=-1$), which we will collectively refer to as $\sigma$, with the specialised cluster algebra denoted $\sA$ and the specialisation of a single element $x\in\mathcal{A}_Q$ denoted $x|_\sigma$. We identify specialisation relations $\sigma$ with the corresponding partial labelling of vertices of $Q$. The initial cluster variables not assigned a~value by specialisation will be referred to simply as \emph{variables} in $\sA$.

\begin{remark}[motivation for Definition~\ref{def:spec}]
Choosing an initial cluster and then specialising all initial cluster variables to~1s is a standard operation on cluster algebras giving rise to so-called unital frieze patterns, in particular to all Conway--Coxeter friezes; see Theorem~\ref{thm:friezes} below. Specialising only to the units of $\Z$ guarantees that the elements of the specialised cluster algebra always remain Laurent polynomials \emph{with integer coefficients}; cf.\ \cite[Remark 2.2]{GunawanSchiffler}. However, generalising Definition~\ref{def:spec} to specialisation to any nonzero values would make sense too, and there exist specialisations that also ensure the integer coefficients, for example, specialising the cluster algebra of type $A_2$ with $x_1=2$, $x_2=3$ results simply in the ring $\Z$.
\end{remark}
\begin{remark}
The definitions and results of this section can be formulated in the more general setting of cluster algebras of geometric type associated with skew-symmetrisable matrices, which have been introduced in \cite{CA1}.
\end{remark}

\begin{definition}
In a specialised cluster algebra, a variable is \emph{polyamorous} if all elements of the algebra are polynomial in that variable. The vertex in the partially labelled quiver corresponding to a polyamorous variable is also called polyamorous. The specialised cluster algebra itself is called polyamorous when all variables in it are polyamorous.
\end{definition}
The name indicates that these variables ``love polynomials''.

The set of vertices adjacent to a given vertex in a quiver together with the vertex itself is called the \emph{neighbourhood} of that vertex.
\begin{definition}
In a partially labelled quiver associated with a specialised cluster algebra, a neighbourhood of a vertex $v$ is a \emph{polycule} if $v$ is not labelled, each neighbour of $v$ is labelled, and the number of arrows connecting $v$ with the vertices labelled with $-1$ is odd.
\end{definition}

\begin{theorem}
\label{thm:main}
In a partially labelled quiver associated with a specialised cluster algebra $\sA$, \\ a vertex is polyamorous if and only if its neighbourhood is a polycule.
\end{theorem}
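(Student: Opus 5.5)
The plan is to reduce everything to the single mutation $\mu_v$ at the vertex $v$ in question. Write $M_+=\prod_{i\to v}x_i$ and $M_-=\prod_{v\to i}x_i$, so that by \eqref{eq:mut} $x_v'=(M_++M_-)/x_v$. The monomials $M_+$ and $M_-$ involve disjoint sets of variables, since $Q$ has no oriented 2-cycles. The key observation is what happens when every neighbour of $v$ is labelled: then $M_+|_\sigma\cdot M_-|_\sigma=(-1)^m$, where $m$ is the number of arrows joining $v$ to vertices labelled $-1$. Hence $(M_++M_-)|_\sigma=0$ exactly when $m$ is odd, and $=\pm2$ otherwise. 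So the polycule condition says precisely that $x_v'|_\sigma=0$. (If $v$ itself is labelled, it is not a variable, so only unlabelled $v$ needs treatment.)

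\emph{Necessity.} Suppose the neighbourhood of the unlabelled vertex $v$ is not a polycule. I would exhibit $x_v'|_\sigma\in\sA$ as an element that is not polynomial in $x_v$.
\begin{itemize}
\item If all neighbours of $v$ are labelled but $m$ is even (this includes an isolated $v$, where $m=0$), then $x_v'|_\sigma=\pm2/x_v$.
\item If some neighbour $i$ is unlabelled, then $x_i$ occurs in exactly one of $M_+|_\sigma$ and $M_-|_\sigma$. These are then two distinct monomials up to sign, so they cannot cancel, and $(M_++M_-)|_\sigma$ is a nonzero Laurent polynomial not involving $x_v$. Dividing it by $x_v$ gives a genuine negative power of $x_v$.
\end{itemize}

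\emph{Sufficiency.} This is the main step. Take any $f\in\mathcal{A}_Q$. By the Laurent phenomenon applied to both the initial cluster and the cluster $\mu_v(x)$, $f$ lies in $R[x_v^{\pm1}]$ and also in $R[x_v'^{\pm1}]$, where $R=\Z[x_i^{\pm1}:i\neq v]$. Write $f=\sum_{k=-K}^{K}c_k\,x_v'^{\,k}$ with $c_k\in R$, and substitute $x_v'=(M_++M_-)x_v^{-1}$. The terms with $k>0$ contribute $c_k(M_++M_-)^kx_v^{-k}$. The terms with $k\le0$ contribute $c_k x_v^{|k|}(M_++M_-)^{-|k|}$, which lie in the localisation $R_{M_++M_-}[x_v]$ and contain only nonnegative powers of $x_v$.

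Comparing coefficients of $x_v^{-k}$ for $k>0$ inside $R_{M_++M_-}[x_v^{\pm1}]$ shows that the $x_v^{-k}$-coefficient of $f$ equals $c_k(M_++M_-)^k$, which lies in $R$. Now apply $\sigma$. It only sends labelled variables (all neighbours of $v$ among them) to $\pm1$, so it maps $R$ to a Laurent ring, and $M_++M_-\mapsto0$. Every negative-power coefficient of $f|_\sigma$ therefore vanishes, and $f|_\sigma$ is polynomial in $x_v$. Since $\sA$ consists exactly of such $f|_\sigma$, the vertex $v$ is polyamorous.

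The delicate point I expect is the coefficient comparison. One must make sure it is legitimate to read off the $x_v^{-k}$ coefficients in the localised ring, and that the specialisation can be applied to the $c_k$ before cancelling, i.e.\ that no factor $(M_++M_-)^{-1}$ survives in a negative-power coefficient. Both follow because those coefficients were just shown to lie in $R$, where the specialisation is defined.
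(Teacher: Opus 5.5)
Your proof is correct and follows the paper's argument. Both use the Laurent phenomenon in the cluster mutated once at $v$ to show that the coefficient of $x_v^{-k}$, $k>0$, is divisible by $P^k$, where $P=M_++M_-$ vanishes under a polycule labelling, and both prove the converse by exhibiting $x_v'|_\sigma=P|_\sigma/x_v$. The differences are minor. You expand starting from the $x_v'$ side, which gives the divisibility $c_kP^k$ with $c_k$ Laurent directly and avoids the paper's remark that $P$ has no monomial factor. You also justify $P|_\sigma\neq0$ in the non-polycule case (unlabelled neighbour, even count, isolated vertex) more explicitly than the paper does.
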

\begin{proof}
Suppose $x_j$ is an initial cluster variable in $\mathcal{A}_Q$, $X$ is the collection of all other cluster variables in the initial cluster, $x_j'$ is the new cluster variable arising from the mutation of the initial cluster at $j$, and $y$ is some cluster variable in $\mathcal{A}_Q$. $y$ can be written as a Laurent polynomial $y(x_j,X)$. Suppose $\sigma$ is a labelling that makes the neighbourhood of $j$ a polycule. We need to show that $y|_\sigma$ is a polynomial in $x_j$.

Generally,
\begin{equation}
\label{eq:y}
y = \frac{a_nx_j^n + a_{n-1}x_j^{n-1}+\ldots+a_1x_j+a_0}{bx_j^m},
\end{equation}
where $a_n,\ldots,a_0$ are integer polynomials in $X$, $b$ is a monic monomial in $X$, and $m,n\geqslant0$. By the Laurent phenomenon, $y$ must remain a Laurent polynomial when expressed through the variables of the initial cluster mutated once at $j$, i.e.\ as a function of $x_j'$ and $X$. This implies that the coefficient of each power of $x_j'$ in $y$ expressed that way has to be a Laurent polynomial in $X$.

Per Eq.~\eqref{eq:mut}, $x_j'=P/x_j$ where $P$ is a polynomial in $X$ which is not a monomial (unless $j$ is an isolated vertex, but in that case its neighbourhood cannot be made a polycule) and does not have a nontrivial monomial factor (because $Q$ has no oriented 2-cycles). When $y$ is expressed through $x_j'$ and $X$, each term $a_kx_j^k$ in the numerator of the right-hand side of \eqref{eq:y} becomes $a_k(x_j')^{m-k}P^{k-m}/b$, so when $k<m$, necessarily $a_k = \tilde{a}_kP^{m-k}$ for some polynomial $\tilde{a}_k$ in $X$. However, when $\sigma$ makes the neighbourhood of $j$ a polycule, $P|_\sigma=0$. Therefore each term $a_kx_j^k$ in \eqref{eq:y} with $k<m$ turns into $0$ under specialisation $\sigma$, which makes $y|_\sigma$ a polynomial in $x_j$, as desired.

This establishes polynomiality of the specialisation of all cluster variables, and the polynomiality of all elements of $\sA$ follows immediately.

It remains to show that when $\sigma$ does not make the neighbourhood of $j$ a polycule, not all cluster variables are polynomial in $x_j$. Indeed, in that case $P|_\sigma\ne0$, so we have $x_j'|_\sigma=\left(P|_\sigma\right)/x_j$ non-polynomial in $x_j$.
\end{proof}
\begin{remark}
In the special case when $Q$ has no oriented cycles, the theorem follows from \cite[Corollary 1.21]{CA3}.
\end{remark}
\begin{corollary}
If $Q$ has at least one pair of vertices $j,k\in Q_0$ with an odd number of arrows between them then there exist specialisation relations $\sigma$ such that $\sA$ is polyamorous and the set of its variables $\{x_{j_1},\ldots,x_{j_\ell}\}$ is nonempty. Moreover, $\sA=\Z[x_{j_1},\ldots,x_{j_\ell}]$.
\end{corollary}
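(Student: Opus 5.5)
We need a $\sigma$ for which every unlabelled vertex has a polycule neighbourhood. The simplest choice is to leave exactly one vertex free. Pick the pair $j,k$ with an odd number of arrows between them, leave $x_j$ unlabelled, and specialise every other initial cluster variable: $x_k=-1$, and $x_i=1$ for all $i\ne j,k$. Every neighbour of $j$ is then labelled. The only vertex labelled $-1$ is $k$, and the number of arrows between $j$ and $k$ is odd, so the neighbourhood of $j$ is a polycule. By Theorem~\ref{thm:main}, $x_j$ is polyamorous. Since it is the only variable, $\sA$ is polyamorous and its set of variables $\{x_j\}$ is nonempty. (More generally, any $\sigma$ in which the unlabelled vertices form an independent set, each with a polycule neighbourhood, would do; but the single-variable choice already proves existence.)

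For the ``moreover'' part I would argue for an arbitrary polyamorous $\sA$ with variables $x_{j_1},\ldots,x_{j_\ell}$, not only the one just built. First, $\Z[x_{j_1},\ldots,x_{j_\ell}]\subseteq\sA$: each initial variable $x_{j_i}$ is a cluster variable, so it lies in $\sA$. Conversely, every element of $\sA$ is a Laurent polynomial in $x_{j_1},\ldots,x_{j_\ell}$ with integer coefficients. This holds by the Laurent phenomenon together with the fact that the other initial variables are specialised to units $\pm1$, so no denominators other than monomials in the $x_{j_i}$ appear. By polyamorousness, each element is polynomial in every $x_{j_i}$ separately.

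The step needing care is passing from ``polynomial in each variable separately'' to ``polynomial jointly''. For a Laurent polynomial this is immediate. Write $f=\sum_\alpha c_\alpha x^\alpha$ with $\alpha\in\Z^\ell$. Being polynomial in $x_{j_i}$, when the other variables are treated as coefficients in the Laurent ring, means exactly that $c_\alpha=0$ whenever $\alpha_i<0$. Imposing this for every $i$ kills all monomials with any negative exponent, so $f\in\Z[x_{j_1},\ldots,x_{j_\ell}]$. I would also state explicitly what polyamorousness means here, namely polynomial in $x_{j_i}$ over the Laurent ring in the remaining variables. That reading is consistent with the proof of Theorem~\ref{thm:main}, where the coefficients $a_k$ are allowed to be arbitrary in $X$.

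Combining the two inclusions gives $\sA=\Z[x_{j_1},\ldots,x_{j_\ell}]$. I expect no real obstacle. The only point to check is that $\sA$ contains the free variables themselves, which holds because they are cluster variables, and that the coefficient ring is exactly $\Z$, which holds because the specialised values $\pm1$ are units.
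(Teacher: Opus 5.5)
Your proof is correct and is essentially the paper's. Your labelling ($x_k=-1$, every other vertex except $j$ labelled $1$) is the simplest instance of the paper's construction, which more generally lets further vertices stay unlabelled provided each has a polycule neighbourhood. The ``moreover'' part is the same two-inclusion argument as the paper's, and your explicit check that a Laurent polynomial polynomial in each variable separately is polynomial jointly fills in a step the paper leaves implicit.
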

\begin{proof}
Label $k$ by $-1$ and the rest of the vertices in the neighbourhood of $j$ (except for $j$ itself) by $1$ or $-1$ such that the neighbourhood becomes a polycule (e.g.\ all by $1$). Label some other vertices (but not $j$) by $\pm1$ in an arbitrary way. Then also label all vertices whose neighbourhoods are not polycules by $\pm1$ in an arbitrary way (this doesn't affect polycules). All vertices that remained unlabelled, including $j$, are polyamorous due to Theorem~\ref{thm:main}. Labels then determine the specialisation relations. Finally, since $\sA$ is polyamorous, it is a subalgebra of the polynomial algebra $\Z[x_{j_1},\ldots,x_{j_\ell}]$, and since the variables of $\sA$ belong to the set of its generators, $\sA$ contains the entire polynomial algebra.
\end{proof}
\begin{corollary}
Suppose $Q$ has no multiple arrows and no isolated vertices (e.g.\ any Dynkin quiver of type $A,D,E$ except for $A_1$) and $A$ is a subset of $Q_0$ with pairwise disjoint neighbourhoods. Then there exists a labelling $\sigma$ of the vertices $Q_0\setminus A$ which makes all vertices from $A$ polyamorous.
\end{corollary}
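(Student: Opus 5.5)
We invoke Theorem~\ref{thm:main}: it suffices to label the vertices of $Q_0\setminus A$ by $\pm1$ so that, for every $v\in A$, the neighbourhood of $v$ is a polycule. Since $A$ has pairwise disjoint neighbourhoods, no vertex of $A$ is a neighbour of another vertex of $A$. Indeed, if $v,w\in A$ were adjacent, then $w$ would lie in both neighbourhoods. So if we leave all vertices of $A$ unlabelled and label every vertex outside $A$, then each $v\in A$ is unlabelled and all of its neighbours are labelled. The first two conditions in the definition of a polycule are therefore automatic.

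It remains to arrange the parity condition independently for each $v\in A$. The plan is as follows. Because $Q$ has no isolated vertices, each $v\in A$ has at least one neighbour. Choose one such neighbour $k_v$. Because $Q$ has no multiple arrows, $v$ and each of its neighbours are joined by exactly one arrow. Hence the number of arrows connecting $v$ with vertices labelled $-1$ equals the number of neighbours of $v$ labelled $-1$. Label $k_v$ by $-1$ and all other neighbours of $v$ by $1$; the count is then $1$, which is odd.

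Consistency is the point to check, and it is where disjointness is used again. A vertex $u\notin A$ might be a neighbour of two different vertices $v,w\in A$ and so receive conflicting instructions. This cannot happen, because $u$ would then lie in both neighbourhoods. So the neighbour sets of distinct elements of $A$ are disjoint and the prescribed labels never clash. Vertices outside $A$ that are neighbours of no vertex of $A$ are labelled arbitrarily, say by $1$; this affects none of the neighbourhoods in question.

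Finally, Theorem~\ref{thm:main} gives that every $v\in A$ is polyamorous in $\sA$. The parenthetical claim about Dynkin quivers also needs a check. Dynkin quivers of type $A,D,E$ are trees, so they have no multiple arrows, and they are connected with at least two vertices except $A_1$, so they have no isolated vertices. The main obstacle is essentially bookkeeping: making sure that disjointness of neighbourhoods rules out both adjacency within $A$ and shared neighbours.
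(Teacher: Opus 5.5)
Your proof is correct and takes the same route as the paper. Disjointness of the neighbourhoods lets you label each one independently, and the absence of isolated vertices and multiple arrows means each can be made a polycule (one neighbour labelled $-1$, the rest $1$), after which Theorem~\ref{thm:main} applies; you additionally spell out details the paper leaves implicit, namely that disjointness forces $A$ to be an independent set and rules out conflicting labels on shared neighbours.
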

\begin{proof}
The neighbourhood of each vertex from $A$ can be labelled independently since they are all disjoint. The conditions on $Q$ ensure that each neighbourhood can be made a polycule, which allows us to apply the theorem.
\end{proof}

\begin{example}
\label{ex:spec}
In the cluster algebra of type $A_2$ of Example~\ref{ex:a2}, generated by the cluster variables $x_1$, $x_2$, $\frac{x_1+1}{x_2}$, $\frac{x_2+1}{x_1}$, $\frac{x_1+x_2+1}{x_1x_2}$, specialise $x_2=-1$. The neighbourhood of vertex~1 becomes a polycule, and the cluster variables are specialised resp.\ to $x_1,\ -1,\ -x_1-1,\ 0,\ -1$; hence the specialised cluster algebra becomes $\Z[x_1]$.
\end{example}

For a given quiver, it is possible to characterise all specialisations that give rise to polyamorous specialised cluster algebras in terms of a system of linear equations over $\Z/2\Z$.
\begin{proposition}
For a quiver $Q$ with $n$ vertices and specialisation relations $\sigma$ that specialise $m$ out of $n$ variables, let $B$ be the antisymmetric adjacency matrix of $Q$ and let $M$ be the $(n-m)\times m$ minor of $B$ in which the columns correspond to the variables specialised by $\sigma$ and the rows correspond the remaining variables. The specialised cluster algebra $\sA$ is polyamorous if and only if the vertices corresponding to the variables of $\sA$ form an independent set and $\bar{M}\mathbf{s}=\mathbf{u}$, where $\bar{M}$ is the image of $M$ under taking its elements modulo $2$, the binary vector $\mathbf{s}$ of size $m$ is defined by $x_j=(-1)^{s_j}$, where the index $j$ runs over the specialised variables, and $\mathbf{u}$ is the binary vector of size $n-m$ of all $1$s.
\end{proposition}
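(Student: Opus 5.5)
By definition, $\sA$ is polyamorous exactly when every one of its variables is polyamorous. By Theorem~\ref{thm:main}, that happens exactly when, for every unlabelled vertex $v$, the neighbourhood of $v$ is a polycule. So the plan is to apply Theorem~\ref{thm:main} vertex by vertex and then translate the polycule condition, over all unlabelled vertices at once, into the stated combinatorial and linear-algebraic conditions.

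A neighbourhood of an unlabelled vertex $v$ is a polycule when two conditions hold.

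\textbf{(a)} Every neighbour of $v$ is labelled. Imposing this for all unlabelled $v$ means no two unlabelled vertices are adjacent. In other words, the vertices of the variables of $\sA$ form an independent set. This condition is symmetric: if $v$ and $w$ are both unlabelled and adjacent, the condition fails at both, so the two formulations agree exactly.

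\textbf{(b)} The number of arrows between $v$ and the vertices labelled $-1$ is odd. Let $b_{vj}$ be the entries of $B$. The number of arrows between $v$ and $j$ is $|b_{vj}|$, and $|b_{vj}|\equiv b_{vj}\pmod 2$. Label the specialised vertices by $j$ and recall $s_j=1$ exactly when $x_j=-1$. The count of arrows from $v$ to $(-1)$-labelled vertices is then $\sum_j |b_{vj}|\,s_j$. Its parity is $\sum_j \bar b_{vj}s_j$ computed in $\Z/2\Z$. This is precisely the $v$-th entry of $\bar M\mathbf{s}$, because the row of $M$ for $v$ consists of the $b_{vj}$ with $j$ ranging over the specialised vertices. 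So (b) holding for all unlabelled $v$ is the equation $\bar M\mathbf{s}=\mathbf{u}$.

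Putting (a) and (b) together, every unlabelled vertex has a polycule neighbourhood if and only if the independence condition holds and $\bar M\mathbf{s}=\mathbf{u}$. Theorem~\ref{thm:main} applied at each unlabelled vertex then gives both directions of the equivalence.

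There is no genuine obstacle here. The only points needing care are the sign conventions: the antisymmetric matrix $B$ has negative entries, but reducing mod~$2$ removes the signs. One should also note that when both conditions hold, every neighbour of $v$ is specialised, so the row of $M$ for $v$ sees every arrow at $v$ and no arrow to an unlabelled vertex is ever counted. Finally, the degenerate case $m=n$ (no variables) is vacuous on both sides.
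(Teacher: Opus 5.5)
Your proposal is correct and follows the same route as the paper. You apply Theorem~\ref{thm:main} at each unlabelled vertex, read ``all neighbours labelled'' as independence of the unlabelled set, and read the parity condition as the corresponding entry of $\bar M\mathbf{s}$ being $1$; your extra care with signs ($|b_{vj}|\equiv b_{vj} \pmod 2$) and with the degenerate case $m=n$ is a harmless refinement.
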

\begin{proof}
By Theorem \ref{thm:main}, $\sA$ is polyamorous if and only if each non-specialised vertex has only specialised neighbours -- which is equivalent to the condition that non-specialised vertices form an independent set -- and the number of arrows connecting a non-specialised vertex to vertices specialised to $-1$ is odd, which is equivalent to the condition that the corresponding component of the binary vector $\bar{M}\mathbf{s}$ is equal to~$1$.
\end{proof}

\section{Application: integral friezes}
\label{sec:app}
A \emph{Conway--Coxeter frieze} \cite{ConwayCoxeter} is an array consisting of bi-infinite rows arranged such that consecutive rows are shifted relatively to each other (as shown in the examples below), the first and the last rows consist of $0$s, the second and the penultimate rows consist of $1$s, and rows in between them (to be referred to as \emph{nontrivial rows}) consist of positive integers such that any $2\times2$ diamond $\begin{smallmatrix}&b\\a&&d\\&c\end{smallmatrix}$ has determinant one: $ad-bc=1$ (the \emph{diamond rule}).
\begin{example}
\label{ex:ccf}
A frieze with two nontrivial rows (which happens to be unique for this number of rows, up to horizontal shifts):
\[
\begin{matrix}
\triv0 && \triv0 && \triv0 && \triv0 && \triv0 && \triv0 && \triv0 && \triv0 && \triv0 && \triv0 & \\
& \triv1 && \triv1 && \triv1 && \triv1 && \triv1 && \triv1 && \triv1 && \triv1 && \triv1 && \triv1 \\
2 && \fundom2 && \fundom1 && \fundom3 && 1 && 2 && 2 && 1 && 3 && 1 & \\
& 3 && \fundom1 && \fundom2      && 2 && 1 && 3 && 1 && 2 && 2 && 1 \\
\triv1 && \triv1 && \triv1 && \triv1 && \triv1 && \triv1 && \triv1 && \triv1 && \triv1 && \triv1 & \\
& \triv0 && \triv0 && \triv0 && \triv0 && \triv0 && \triv0 && \triv0 && \triv0 && \triv0 && \triv0 \\ 
\end{matrix}
\]
\end{example}
Any Conway--Coxeter frieze is \emph{tame}, which means that any $3\times3$ diamond in it has determinant zero.

Also, any Conway--Coxeter frieze with $n$ nontrivial rows possesses a glide symmetry with a horizontal shift by $(n+3)/2$ positions. This implies that a fundamental domain for that symmetry includes $\left(\begin{smallmatrix}n+2\\2\end{smallmatrix}\right)-1$ elements from nontrivial rows, such as the elements highlighted in Example~\ref{ex:ccf}. These elements in a fundamental domain are precisely the cluster variables of a cluster algebra of type $A_n$ (arranged in a specific way; we omit the details here for conciseness) with all initial cluster variables specialised to~1s and with different friezes resulting from different choices of the initial cluster \cite{CC06}.
\begin{example}
\label{ex:genfr}
The cluster variables from a cluster algebra of type $A_2$, listed in Example~\ref{ex:a2}, can be arranged as
\[
\begin{matrix}
\frac{1+x_2}{x_1} && x_1 && \frac{1+x_1+x_2}{x_1x_2} \\
& x_2 && \frac{1+x_1}{x_2}
\end{matrix}
\]
Upon specialisation $x_1=1$, $x_2=1$, this becomes the fundamental domain highlighted in Example~\ref{ex:ccf}.
\end{example}
Both the diamond rule and the tameness condition for a frieze formed this way follow from the relations between cluster variables; the specialisation to~1s plays no role in it.

See \cite{ourApp} for an interactive visual demonstration of the relations between friezes, cluster algebras of type $A_n$, and other objects.

We will now consider \emph{tame frieze patterns over $\Z$}, which generalise Conway--Coxeter friezes by allowing zero and negative entries in the nontrivial rows; they are also known as [not-necessarily-positive] \emph{integral friezes}. The polyamorous phenomenon enables us to establish a connection between tame frieze patterns over $\Z$ and cluster algebras of type $A_n$ generalising the analogous result for Conway--Coxeter friezes that was outlined above.

\begin{theorem}
\label{thm:friezes}
Consider the cluster algebra of type $A_n$, for some $n$. Specialise it such that the resulting specialised cluster algebra is polyamorous. Specialise further the remaining variables to any values from $\Z$. The resulting values of cluster variables form a tame frieze pattern over $\Z$. Moreover, any tame frieze pattern over $\Z$ arises this way.
\end{theorem}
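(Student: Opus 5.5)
The proof splits into two directions: a forward direction showing that the construction always yields a tame integral frieze, and a converse showing that every tame integral frieze arises this way.

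\emph{Forward direction.} Arrange the cluster variables of $\mathcal{A}_Q$ (type $A_n$) in the standard frieze pattern, as in Example~\ref{ex:genfr}. The diamond rule and the tameness condition are polynomial identities among these Laurent polynomials. Both survive any specialisation that keeps every entry well defined.

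By Theorem~\ref{thm:main}, after the polyamorous specialisation $\sigma$ every cluster variable lies in $\Z[x_{j_1},\ldots,x_{j_\ell}]$. This is where integrality of the coefficients comes from: $\sigma$ uses only the values $\pm1$. Evaluating the remaining variables at arbitrary integers is then a ring homomorphism $\Z[x_{j_1},\ldots,x_{j_\ell}]\to\Z$. It never divides by zero, because the entries are genuine polynomials. Hence every frieze entry is an integer, and the identities continue to hold.

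The border rows of $0$s and $1$s are constants in the cluster-algebraic frieze and are not affected. So the result is a tame frieze pattern over $\Z$. This direction is routine.

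\emph{Converse.} Given a tame integral frieze with $n$ nontrivial rows, I would locate a suitable initial cluster inside it. For tame friezes, the entries along any triangulation-shaped ``zig-zag'' (the positions of a cluster of type $A_n$) determine the whole frieze through the Laurent expansions, provided those entries are nonzero. That condition fails in general, since entries may be $0$. My plan is to choose the cluster so that most of its entries are $\pm1$, and the remaining ones sit at isolated vertices whose neighbourhoods are polycules; these remaining vertices may carry any value, including $0$.

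Concretely, I would use the known classification of tame integral friezes: each is described by a quiddity sequence $(c_i)$ with $\pm$ monodromy. By an inductive reduction, one should show that the frieze contains a ``path'' cluster (a zig-zag, or a fan triangulation) whose entries are $\pm1$ except at an independent set of positions $A$. At each $v\in A$, the two exchange monomials must sum to $0$, which is exactly the polycule parity condition. The intended mechanism is that a $0$ entry in the frieze forces neighbouring entries, via the diamond rule, to be $\pm1$ with opposite-sign products.

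Once such a cluster is found, specialise the $\pm1$ positions as $\sigma$ and the $A$ positions to their frieze values. Tameness guarantees that the frieze is determined by any one of its clusters, read via polynomial expressions by Theorem~\ref{thm:main}. Therefore the constructed frieze coincides with the given one.

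\emph{Main obstacle.} The hard step is the existence of such a cluster. I would attempt it by induction on $n$, proceeding as follows.

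1. If some nontrivial entry equals $\pm1$, cut the frieze along it, as in Conway--Coxeter ear removal. This reduces to smaller friezes, possibly of $\mathrm{SL}_2$-type $-1$ variants, and the resulting clusters can be glued.

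2. If no entry is $\pm1$, I expect a contradiction or a degenerate case. Tame integral friezes with entries such as $0$ force $\pm1$ entries nearby, via the relation $0\cdot d - bc = 1$. In that case $b,c\in\{\pm1\}$, which supplies the labelled neighbours around the $0$.

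This is where I anticipate most of the work, so I would sketch this induction rather than prove it fully.
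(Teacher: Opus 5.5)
\textbf{Verdict: there is a genuine gap.} Your forward direction is fine. It is the paper's remark that the diamond rule and tameness are identities among cluster variables, combined with integrality from Theorem~\ref{thm:main}. The converse carries all the content, and there the existence of the required cluster is only conjectured.

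\textbf{The existence step.} Your step~2 is the key lemma of the paper's argument: a closed path in the Farey graph of length greater than $4$ has two non-consecutive vertices that are Farey neighbours, i.e.\ the frieze has a nontrivial entry $\pm1$. Your diamond-rule remark only handles friezes that already contain a $0$. For the frieze itself you could argue as follows: if all $|c_i|\ge2$, the continuants grow strictly in absolute value, contradicting monodromy $-I$; so some $c_i\in\{0,\pm1\}$, and a $0$ in the first row forces a $-1$ in the second.

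Step~1 does not close either. Cutting along a $-1$ yields pieces with a boundary edge labelled $-1$, which are not friezes in the sense of the theorem. After regluing, that edge is a $-1$-labelled vertex adjacent to the neighbouring polycule vertices, so it enters their parity count. Your inductive statement must therefore be about polygons with $\pm1$ boundary labels. The Farey-path formulation gives this for free, which is why the paper cuts by a maximal non-crossing family of $\pm1$ diagonals all at once.

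You also need to identify where the recursion ends. Pieces without $\pm1$ chords are triangles and quadrilaterals. In a quadrilateral with sides $\pm1$, the Ptolemy relation gives $d_1d_2=\pm1\pm1\in\{0,\pm2\}$, and $\pm2$ would make a diagonal $\pm1$. Hence $d_1d_2=0$, which is exactly the polycule condition and the source of the zeros.

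Finally, the restriction to zig-zag or fan clusters is false. The Conway--Coxeter frieze of the hexagon triangulated by $\{13,35,15\}$ has no negative entries, so no polycule is possible. The only admissible cluster is then $\{13,35,15\}$ itself, whose quiver is an oriented $3$-cycle.

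\textbf{The reconstruction step.} Even with such a cluster in hand, this step fails. Once zeros occur, a tame frieze is \emph{not} determined by its values on a cluster, and your criteria (entries $\pm1$ off an independent set $A$, polycules at $A$) do not single out the given frieze.

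Take $n=2$ and the tame integral frieze with $5$-periodic quiddity row $(c_1,\dots,c_5)=(-1,5,0,-6,-1)$. In the arrangement of Example~\ref{ex:genfr}, put $x_1=c_3=0$ and $x_2=c_2c_3-1=-1$. Vertex~$1$ is a polycule, so your recipe specialises $x_2=-1$, $x_1=0$. But $x_1'=(1+x_2)/x_1$, which occupies the position of $c_2$, specialises to $0$ rather than $5$. You get the quiddity row $(-1,0,0,-1,-1)$, the member $x_1=0$ of the family displayed at the end of Section~\ref{sec:app}. The given frieze is instead (a translate of) the member $x_1=-6$, coming from the cluster $x_1=c_4=-6$, $x_2=c_3c_4-1=-1$.

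The missing ingredient is the choice the paper makes explicitly. In each polycule quadrilateral, the diagonal placed in the cluster must carry the free value, and its exchange partner must be the $0$; this is automatic if the chosen value is nonzero. You then still need an actual uniqueness argument. For instance, rebuild the vectors $v_i$ of the Farey path triangle by triangle from the labelled triangulation, using the vanishing of the partner diagonal whenever the chosen value is $0$.

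Your description of the mechanism (a $0$ flanked by $\pm1$s) does not distinguish the two diagonals. Your appeal to Theorem~\ref{thm:main} only shows that the constructed frieze is integral, not that it equals the given one.
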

\textit{Outline of proof.}
It is the last part of the theorem that is difficult. In fact, any frieze pattern over $\Z$ arises from a closed path $P$ in the Farey graph $\mathcal{F}$ \cite{sl2short,OUmodular}. It can be shown that any closed path in $\mathcal{F}$ of length greater than 4 has a pair of non-consecutive vertices that are adjacent in $\mathcal{F}$. This allows us to associate with $P$ a polygon dissected into triangles and quadrilaterals by a (non-unique) maximal collection of diagonals $M$. The diagonals in $M$ correspond to the values $1$ or $-1$ in the nontrivial rows of the frieze and are labelled by these values. It turns out that any quadrilateral $R$ in the dissection corresponds to a polycule in the associated quiver and to a point of self-intersection in $P$, so one pair of opposite vertices in $R$ corresponds to a $0$ in the frieze and another to some number that depends on $P$. Adding the diagonal between that latter pair of vertices, labelled by the corresponding frieze element, to the dissection, and doing this for each rectangle $R$, we obtain the dissection of a polygon into triangles by labelled diagonals (this is similar to the combinatorial model introduced in \cite{CuntzHolmModel}). Then the diagonals in the dissection determine the choice of the cluster, the diagonals in $M$ correspond to the initially specialised cluster variables, and the rest of the diagonals correspond to the polyamorous variables.

The full proof of Theorem~\ref{thm:friezes}, together with the necessary background information that was omitted here, will be presented in \cite{mythesisFuture}.

\begin{example}
When the cluster algebra of type $A_2$ is specialised to $x_2=-1$ (as in Example~\ref{ex:spec}), the fundamental domain shown in Example~\ref{ex:genfr} gives rise to the family of friezes of the following form:
\[
\begin{matrix}
\triv0 && \triv0 && \triv0 && \triv0 && \triv0 && \triv0 && \triv0 &\\
\phantom{-1}& \triv1 && \triv1 && \triv1 && \triv1 && \triv1 && \triv1 \\
\clap{$-1-x_1$} && \fundom0 && \fundom{$x_1$} &\phantom{-1}& \fundom{$-1$} && -1 && \clap{$-1-x_1$} && 0\\
& -1 && \fundom{$-1$} && \clap{\fundom{$-1-x_1$}}                 && 0 && x_1        &\phantom{-1}& -1 &\\
\triv1 && \triv1 && \triv1 && \triv1 && \triv1 && \triv1 && \triv1\\
& \triv0 && \triv0 && \triv0 && \triv0 && \triv0 && \triv0 &\\ 
\end{matrix}
\]
Any integral frieze with 2 nontrivial rows is either positive (i.e.\ it is a Conway--Coxeter frieze) or of this form (with $x_1$ specialised to an integer) or related to this form by a translation or a glide reflection.
\end{example}

Note that the polyamorous phenomenon is essential for this construction: only a variable which is polyamorous can be specialised to zero, or in fact to any integer without making any of the resulting frieze elements non-integer.

\section*{Acknowledgements}
This research is supported by EPSRC grant EP/W524098/1.
I am grateful to Antoine de Saint Germain, Matty van Son, Ian Short, Anna Felikson and Pavel Tumarkin for useful discussions.
Part of this work has been done during my visit to the Department of Mathematical Sciences of Durham University.

\printbibliography

\end{document}